\newcommand{\C}{\mathbb{C}}
\newcommand{\QQ}{\mathbb{Q}}
\newcommand{\NN}{\mathbb{N}}
\newcommand{\PP}{\mathbb{P}}
\newcommand{\LLL}{\mathbb{L}}
\newcommand{\MM}{\mathcal M}
\newcommand{\rom}{\romannumeral}
\newcommand*{\da@rightarrow}{\mathchar"0\hexnumber@\symAMSa 4B }
\newcommand*{\da@leftarrow}{\mathchar"0\hexnumber@\symAMSa 4C }
\newcommand*{\xdashrightarrow}[2][]{%
  \mathrel{%
    \mathpalette{\da@xarrow{#1}{#2}{}\da@rightarrow{\,}{}}{}%
  }%
}
\newcommand{\xdashleftarrow}[2][]{%
  \mathrel{%
    \mathpalette{\da@xarrow{#1}{#2}\da@leftarrow{}{}{\,}}{}%
  }%
}
\newcommand*{\da@xarrow}[7]{%
  \sbox0{$\ifx#7\scriptstyle\scriptscriptstyle\else\scriptstyle\fi#5#1#6\m@th$}%
  \sbox2{$\ifx#7\scriptstyle\scriptscriptstyle\else\scriptstyle\fi#5#2#6\m@th$}%
  \sbox4{$#7\dabar@\m@th$}%
  \dimen@=\wd0 %
  \ifdim\wd2 >\dimen@
    \dimen@=\wd2 %
  \fi
  \count@=2 %
  \def\da@bars{\dabar@\dabar@}%
  \@whiledim\count@\wd4<\dimen@\do{%
    \advance\count@\@ne
    \expandafter\def\expandafter\da@bars\expandafter{%
      \da@bars
      \dabar@ 
    }%
  }%
  \mathrel{#3}%
  \mathrel{%
    \mathop{\da@bars}\limits
    \ifx\\#1\\%
    \else
      _{\copy0}%
    \fi
    \ifx\\#2\\%
    \else
      ^{\copy2}%
    \fi
  }%
  \mathrel{#4}%
}
\newcommand\undermat[2]{
  \makebox[0pt][l]{$\smash{\underbrace{\phantom{
    \begin{matrix}#2\end{matrix}}}_{\text{$#1$}}}$}#2}
\newtheorem{convention}{Conventions}
\newtheorem{nonumbering}{Theorem}
\newtheorem{nonumberingc}{Corollary}
 \journalname{}
\begin{document}

\title{Some cubics with finite--dimensional motive}

\author{Robert Laterveer}

\institute{CNRS - IRMA, Universit\'e de Strasbourg \at
              7 rue Ren\'e Descartes \\
              67084 Strasbourg cedex\\
              France\\
              \email{laterv@math.unistra.fr}   }

\date{Received: date / Accepted: date}

\maketitle

\begin{abstract}
This small note presents in any dimension a family of cubics that have finite--dimensional motive (in the sense of Kimura). As an illustration, we verify a conjecture of Voevodsky for these cubics, and a conjecture of Murre for the Fano variety of lines of these cubics.
 \end{abstract}

\keywords{Algebraic cycles \and Chow groups \and motives \and finite--dimensional motives \and cubics 
}

\subclass{14C15, 14C25, 14C30.}

\section{Introduction}

The notion of finite--dimensional motive, developed independently by Kimura and O'Sullivan \cite{Kim}, \cite{An}, \cite{MNP}, \cite{J4}, \cite{Iv} has given important new impetus to the study of algebraic cycles. To give but one example: thanks to this notion, we now know the Bloch conjecture is true for surfaces of geometric genus zero that are rationally dominated by a product of curves \cite{Kim}. It thus seems worthwhile to find concrete examples of varieties that have finite--dimensional motive, this being (at present) one of the sole means of arriving at a satisfactory understanding of Chow groups. 

The present note aims to contribute something to the list of examples of varieties with finite--dimensional motive, by considering cubic hypersurfaces.
In any dimension, there is one famous cubic known to have finite--dimensional motive: the Fermat cubic
  \[ (x_0)^3+(x_1)^3+\cdots+(x_{n+1})^3=0\ .\]
  The Fermat cubic has finite--dimensional motive because it is rationally dominated by a product of curves, and the indeterminacy locus is again of Fermat type \cite{Sh}. 
  In \cite{excubic4}, I proved finite--dimensionality for a certain $10$--dimensional family of cubic fourfolds.
  The main result of this note gives, for any dimension, a family of cubics (containing the Fermat cubic) with finite--dimensional motive:
  
  \begin{nonumbering}[=theorem \ref{main}] Let $X\subset\PP^{n+1}(\C)$ be a smooth cubic defined by an equation
  \[ f_0(x_0,\ldots,x_4)+f_1(x_5,\ldots,x_9) + \cdots + f_r(x_{5r},\ldots,x_{5r+4}) + f_{r+1}(x_{5(r+1)},\ldots,x_{n+1})       =0\ ,\]
  where $f_0, \ldots, f_r$ define smooth cubics of dimension $3$, and
  $f_{r+1}$ defines a smooth cubic of dimension $<3$ (i.e., $r=\lfloor{n+1\over 5}\rfloor$). Then $X$ has finite--dimensional motive.  
     \end{nonumbering}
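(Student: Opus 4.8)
The plan is to peel off the summands $f_i$ one at a time and to show that adjoining a summand preserves finite-dimensionality, so that everything reduces to the finite-dimensionality of the individual building blocks. Recall that the class of motives with finite-dimensional motive is closed under direct sums, tensor products, passage to direct summands (images of idempotent correspondences) and Tate twists. Hence it suffices to (i) establish a \emph{gluing} step: if $X_f$ and $X_g$ are smooth cubics in disjoint sets of variables with finite-dimensional motive, then the cubic $X_{f+g}$ cut out by $f+g$ also has finite-dimensional motive; and (ii) check that each piece $X_{f_i}$ has finite-dimensional motive. Writing $X=X_{(f_0+\cdots+f_r)+f_{r+1}}$ and inducting on the number of summands, the theorem then follows.

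For the gluing step I would exploit the join structure of $X_{f+g}$. If $f$ lives in the variables spanning $\PP(V_1)$ and $g$ in those spanning $\PP(V_2)$, then $\PP(V_1)$ and $\PP(V_2)$ sit as complementary linear subspaces of the ambient $\PP(V_1\oplus V_2)$, and the rational map $[x:y]\mapsto([x],[y])$ exhibits $X_{f+g}$ as birational to a cyclic triple cover of $\PP(V_1)\times\PP(V_2)$ branched along $X_f\times\PP(V_2)$ and $\PP(V_1)\times X_g$: over a point with $f(x),g(y)\neq 0$ the fibre is the $\mu_3$--torsor of scalings $\lambda$ with $\lambda^3=-f(x)/g(y)$, and the function $-f(x)/g(y)$ has its zeros and poles precisely along the two divisors. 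Decomposing the motive of this cover into $\mu_3$--eigenspaces and analysing the pieces supported on the two branch divisors and on their intersection realizes $h(X_{f+g})$ as a direct summand of a finite sum of Tate twists of $\mathbb{1}$, $h(X_f)$, $h(X_g)$ and $h(X_f)\otimes h(X_g)$. Granting the closure properties above, finite-dimensionality of $X_f$ and $X_g$ then passes to $X_{f+g}$. This is the motivic incarnation of the Shioda--Katsura inductive structure for Fermat varieties, and making it precise at the level of Chow motives --- producing honest algebraic correspondences, resolving the indeterminacy of the cover by a blow-up, and verifying the projector identities in $\mathrm{CH}$ rather than merely in cohomology --- is the step I expect to be the main obstacle.

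It remains to treat the building blocks. For $i\le r$ the hypersurface $X_{f_i}$ is a smooth cubic threefold; its motive splits as $\mathbb{1}\oplus\LLL\oplus h^3(X_{f_i})\oplus\LLL^2\oplus\LLL^3$, and the middle part $h^3(X_{f_i})$ is of abelian type, being isomorphic up to twist to $h^1$ of the Fano surface of lines (whose Albanese is the intermediate Jacobian). Since $h^1$ of any smooth projective variety is finite-dimensional, $X_{f_i}$ has finite-dimensional motive. For $f_{r+1}$, which defines a smooth cubic of dimension $\le 2$, the motive is either that of a finite set of points, of an elliptic curve, or of a rational surface whose motive is a sum of Tate motives; in all cases it is finite-dimensional. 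Feeding these building blocks into the induction of the first paragraph via the gluing step yields finite-dimensionality of $X$.
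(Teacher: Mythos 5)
Your overall strategy (Shioda--Katsura gluing plus elementary building blocks) is the right family of ideas and matches the paper in spirit, but your key gluing lemma is \emph{false as stated}, and the failure is exactly where the real content of the theorem sits. The $\mu_3$--eigenspace analysis of the triple cover does \emph{not} realize $h(X_{f+g})$ inside Tate twists of the unit, $h(X_f)$, $h(X_g)$ and $h(X_f)\otimes h(X_g)$: the eigenspaces of a cyclic cover are not built out of the base and the branch divisor. The simplest symptom occurs already in relative dimension one: the triple cover of $\PP^1$ branched along $X_f=\{f(x_0,x_1)=0\}$ (three points, a Tate motive) is the elliptic curve $E_1=\{f(x_0,x_1)+y^3=0\}$, whose $h^1$ is not Tate. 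For a concrete counterexample to your lemma, take $f$ a generic binary cubic and $g$ a generic ternary cubic with associated elliptic curve $E=\{g=0\}$; then $X_{f+g}\subset\PP^4$ is a cubic threefold, and the Katsura--Shioda structure (proposition \ref{sh}) shows $H^3(X_{f+g})$ contains $H^1(E_1)(-1)$ as a direct factor (one gets $J(X_{f+g})$ isogenous to a product of copies of $E_1$ and $E$). Your list of building blocks has, in odd degree, only twists of $h^1(E)$ (since $h(X_f)$ is Tate and $h(X_f)\otimes h(X_g)\simeq h(E)^{\oplus 3}$), so your lemma would force $J(X_{f+g})\sim E^5$; but for generic $f,g$ the curves $E_1$ and $E$ are not isogenous. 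The correct gluing statement must use the one--dimension--higher cyclic covers $X_1=\{f+y^3=0\}$ and $X_2=\{g+z^3=0\}$ as factors: there is a degree--$3$ dominant rational map $X_1\times X_2\dashrightarrow X_{f+g}$ whose indeterminacy is resolved by blowing up $\bigl(X_1\cap\{y=0\}\bigr)\times\bigl(X_2\cap\{z=0\}\bigr)=X_f\times X_g$. Note that this also dissolves what you flagged as ``the main obstacle'': no projector identities in $CH$ are needed, since finite--dimensionality is stable under products, under blow--ups with finite--dimensional centers, and under dominant morphisms, so dominance by the blow--up suffices.

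This repair is not cosmetic: because every gluing introduces an extra cube $y^3$, the statement must be strengthened so the induction can carry the cube along (the paper's five cases (\romannumeral1)--(\romannumeral5)), and the induction then bottoms out at the cubic fourfold $\{f_0(x_0,\ldots,x_4)+y^3=0\}$. That fourfold is \emph{not} one of your elementary building blocks: it has $h^{3,1}=1$, so Vial's criterion $A^\ast_{AJ}=0$ does not apply; it is not visibly dominated by curves; and by the same Hodge--type obstruction as above ($(3,1)$ classes cannot come from twists of $h^3$ of a threefold or of Tate motives) its motive cannot be assembled from $h(\{f_0=0\})$ plus Tate twists. Its finite--dimensionality is precisely the main theorem of \cite{excubic4}, which the paper quotes as the one non--formal base case; any Shioda--type argument that never confronts this fourfold --- as yours, symptomatically, does not --- cannot be correct. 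Your treatment of the genuine low--dimensional blocks (cubic threefolds via $A^\ast_{AJ}=0$ or the Fano surface, curves, del Pezzo surfaces) is fine and agrees with the paper's base cases.
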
  

The proof is an elementary argument: employing the inductive structure exhibited by Shioda \cite{Sh}, \cite{KS}, one reduces theorem \ref{main} to the main result of \cite{excubic4}.

To illustrate how nicely the concept of finite--dimensionality allows to understand algebraic cycles, we provide an application to a conjecture of Voevodsky concerning smash--equivalence \cite{Voe}:

\begin{nonumberingc}[=corollary \ref{smash}] Let $X$ be a cubic as in theorem \ref{main}, and suppose $n=\dim X$ is odd. Then numerical equivalence and smash--equivalence coincide for all algebraic cycles on 
$X$.
\end{nonumberingc}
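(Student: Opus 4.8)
The plan is to reduce the statement to showing that every cycle on $X$ that is numerically trivial is in fact smash--nilpotent; the reverse implication (smash--nilpotent cycles are numerically trivial) holds for any smooth projective variety \cite{Voe}, so the two equivalence relations will then coincide. To exploit finite--dimensionality I would first decompose the Chow motive of $X$. Since $X$ is a smooth cubic hypersurface, the Lefschetz hyperplane theorem together with Poincaré duality show that $H^i(X)=H^i(\PP^{n+1})$ for $i\neq n$; in particular, because $n$ is odd, the only odd cohomology of $X$ sits in the middle degree $n$. This lets me write, as Chow motives with $\QQ$--coefficients,
\[ h(X)=\Bigl(\bigoplus_{i=0}^{n}\LLL^{\,i}\Bigr)\oplus h^n_{\mathrm{prim}}(X),\]
where the Lefschetz summands $\LLL^{\,i}$ carry the (algebraic) even cohomology and $h^n_{\mathrm{prim}}(X)$ is the primitive part, concentrated in the odd degree $n$.

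Next I would treat the two kinds of summand separately. For each Lefschetz motive one has $CH^\ast(\LLL^{\,i})_\QQ=\QQ$, concentrated in the appropriate codimension and generated by a power of the hyperplane class; a numerically trivial class in such a group is necessarily $0$, hence trivially smash--nilpotent. The whole point is therefore the primitive summand. By Theorem \ref{main} the motive $h(X)$, and hence its direct summand $h^n_{\mathrm{prim}}(X)$, is finite--dimensional; since this summand has cohomological realization concentrated in the odd degree $n$, it is \emph{oddly} finite--dimensional. I would then invoke the principle (due to Kimura, and underlying the known cases of Voevodsky's conjecture \cite{Kim}, \cite{An}) that a numerically trivial cycle lying on an oddly finite--dimensional motive is smash--nilpotent: concretely, if $z$ is such a cycle then the self--products $z^{\times m}$ are geometrically symmetric, and on an oddly finite--dimensional motive the relevant symmetric powers vanish, forcing $z^{\times m}=0$ for $m\gg 0$. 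Summing the contributions of the summands, every numerically trivial cycle on $X$ is smash--nilpotent, which is what remained to be shown.

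I expect the genuine content to lie in two places. First, in justifying the displayed decomposition as an identity of \emph{Chow} motives rather than merely of cohomology, i.e. in lifting the Lefschetz isomorphisms to algebraic correspondences on the hypersurface so that the even part really splits off as a sum of Lefschetz motives with one--dimensional Chow groups. Second, and most importantly, in the smash--nilpotence statement for the oddly finite--dimensional piece, where the sign coming from the odd weight $n$ is exactly what converts the geometric symmetry of $z^{\times m}$ into vanishing against a trivial symmetric power. This is also where the hypothesis that $n$ is odd is indispensable: were $n$ even, the interesting middle motive would be \emph{evenly} finite--dimensional, and no such nilpotence argument would be available — which is precisely why the corollary is stated for odd--dimensional cubics only.
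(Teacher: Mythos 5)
Your proposal is correct and follows essentially the same route as the paper: split off the algebraic even part of $h(X)$ as Lefschetz motives (the paper does this via Kleiman's algebraicity of the K\"unneth components of a hypersurface), observe that numerically trivial cycles live on the middle summand, show that summand is oddly finite--dimensional using $n$ odd and theorem \ref{main}, and conclude by Kimura's result that all cycles on an oddly finite--dimensional motive are smash--nilpotent. The only cosmetic difference is in how odd finite--dimensionality is justified --- you kill the even part of the Kimura decomposition by its vanishing cohomology, while the paper notes $\hbox{Sym}^m h_n(X)$ vanishes homologically and lifts this to $\MM_{\rm rat}$ by finite--dimensionality --- but these are equivalent standard arguments.
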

(For the definition of smash--equivalence, cf. definition \ref{sm}.)

We also establish the existence of a Chow--K\"unneth decomposition (as conjectured by Murre \cite{Mur}) for the Fano variety of lines on a cubic as in theorem \ref{main}; this is corollary \ref{CK}.

\vskip0.4cm

\begin{convention} All varieties will be projective irreducible varieties over $\C$.

For smooth $X$ of dimension $n$, we will denote by $A^j(X)=A_{n-j}(X)$ the Chow group $CH^j(X)\otimes{\QQ}$ of codimension $j$ cycles under rational equivalence. The notation $A^j_{num}(X)$ and $A^j_{\otimes}(X)$ will denote the subgroup of numerically trivial resp. smash-nilpotent cycles. The category $\MM_{\rm rat}$ will denote the (contravariant) category of Chow motives \cite{Sch}, \cite{MNP} over $\C$.
For a smooth projective variety, $h(X)=(X,\Delta_X,0)$ will denote its motive in $\MM_{\rm rat}$. 

\end{convention}

\section{Finite--dimensionality}

We refer to \cite{Kim}, \cite{An}, \cite{MNP}, \cite{Iv}, \cite{J4} for the definition of finite--dimensional motive. 
An essential property of varieties with finite--dimensional motive is embodied by the nilpotence theorem:

\begin{theorem}[Kimura \cite{Kim}]\label{nilp} Let $X$ be a smooth projective variety of dimension $n$ with finite--dimensional motive. Let $\Gamma\in A^n(X\times X)_{}$ be a correspondence which is numerically trivial. Then there is $N\in\NN$ such that
     \[ \Gamma^{\circ N}=0\ \ \ \ \in A^n(X\times X)_{}\ .\]
\end{theorem}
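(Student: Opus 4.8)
The plan is to regard $\Gamma$ as an endomorphism of the Chow motive $h(X)=(X,\Delta_X,0)$ in $\MM_{\rm rat}$, so that $\Gamma^{\circ N}$ is literally the $N$--fold composite in $\operatorname{End}(h(X))$ and the theorem becomes the categorical assertion that a numerically trivial endomorphism of a finite--dimensional motive is nilpotent. Finite--dimensionality supplies a splitting $h(X)=M_+\oplus M_-$ with $M_+$ evenly finite--dimensional, say $\wedge^{a+1}M_+=0$, and $M_-$ oddly finite--dimensional, say $\operatorname{Sym}^{b+1}M_-=0$, for suitable $a,b\in\NN$. Writing $\Gamma$ as a matrix of blocks $\gamma_{++}\in\operatorname{End}(M_+)$, $\gamma_{--}\in\operatorname{End}(M_-)$, $\alpha\colon M_+\to M_-$ and $\beta\colon M_-\to M_+$ relative to this splitting, it suffices to control each block and then recombine.

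The heart of the matter is a categorical Cayley--Hamilton identity: in a $\QQ$--linear rigid symmetric monoidal pseudo--abelian category, an object $M$ with $\wedge^{a+1}M=0$ and an endomorphism $g\in\operatorname{End}(M)$ satisfy
\[ \sum_{i=0}^{a+1}(-1)^i\,\operatorname{tr}(\wedge^i g)\,g^{\circ(a+1-i)}=0, \]
the coefficients lying in $\operatorname{End}(\mathbf 1)=\QQ$. If $g$ is numerically trivial then every power $g^{\circ j}$ is numerically trivial as well (numerical equivalence is stable under composition), so each power sum $\operatorname{tr}(g^{\circ j})$ vanishes, this categorical trace being computed by an intersection number that numerical triviality kills. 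By Newton's identities the elementary symmetric invariants $\operatorname{tr}(\wedge^i g)$ then vanish for all $i\ge 1$, and the identity collapses to $g^{\circ(a+1)}=0$. The dual identity built from symmetric powers, whose coefficients $\operatorname{tr}(\operatorname{Sym}^i g)$ are the complete homogeneous functions of the same power sums, gives $g^{\circ(b+1)}=0$ when $\operatorname{Sym}^{b+1}M=0$. Applying this to the diagonal blocks yields $\gamma_{++}^{\circ(a+1)}=0$ and $\gamma_{--}^{\circ(b+1)}=0$.

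It remains to absorb the off--diagonal blocks. Because $M_+$ and $M_-$ realise in even, respectively odd, cohomological degrees, any morphism $M_+\to M_-$ or $M_-\to M_+$ is homologically --- hence numerically --- trivial; in particular $\beta\circ\alpha\in\operatorname{End}(M_+)$ and $\alpha\circ\beta\in\operatorname{End}(M_-)$ are numerically trivial, so by the previous paragraph they too are nilpotent. Thus the ideal $\mathcal N\subset\operatorname{End}(h(X))$ of numerically trivial endomorphisms consists of elements that are nilpotent with uniformly bounded index on each pure--parity factor and on the off--diagonal products; a routine bookkeeping of words in the four blocks (equivalently: $\mathcal N$ is a nilpotent ideal) shows that $\Gamma$ itself is nilpotent, i.e. $\Gamma^{\circ N}=0$ for some $N$.

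The main obstacle is the Cayley--Hamilton identity and its odd analogue: establishing it requires decomposing $M^{\otimes(a+1)}$ into Schur components under the $S_{a+1}$--action and extracting a polynomial relation among the $g^{\circ j}$ from the vanishing $\wedge^{a+1}M=0$, together with the identification of the categorical invariant $\operatorname{tr}(\wedge^i g)$ with the $i$--th elementary symmetric function of the power sums $\operatorname{tr}(g^{\circ j})$. This last identification is exactly what converts numerical triviality (vanishing intersection numbers) into vanishing of the categorical coefficients, and is where the substance lies. By comparison, the parity vanishing of the cross terms and the combinatorial passage from block--nilpotency to nilpotency of $\Gamma$ are straightforward.
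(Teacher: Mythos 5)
The paper offers no proof of this statement---it is quoted directly from Kimura \cite{Kim}---and your argument is essentially a faithful reconstruction of Kimura's own proof: split $h(X)$ into evenly and oddly finite--dimensional summands, derive the categorical Cayley--Hamilton identity from $\wedge^{a+1}M_+=0$ (resp.\ its symmetric--power analogue from $\operatorname{Sym}^{b+1}M_-=0$), kill the coefficients $\operatorname{tr}(\wedge^i g)$ via numerical triviality of the power sums $\operatorname{tr}(g^{\circ j})$ and Newton's identities, and dispose of the off--diagonal blocks by parity before a standard nilpotent--ideal bookkeeping. This is correct and takes the same route as the cited source; the only point you assert without justification---that $M_\pm$ realise in pure even/odd degrees---is the standard super--linear--algebra fact that $\wedge^{a+1}$ (resp.\ $\operatorname{Sym}^{b+1}$) of a super vector space with nonzero odd (resp.\ even) part never vanishes.
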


 Actually, the nilpotence property (for all powers of $X$) could serve as an alternative definition of finite--dimensional motive, as shown by a result of Jannsen \cite[Corollary 3.9]{J4}.
 
 \begin{conjecture}[Kimura \cite{Kim}]\label{findim}
  All smooth projective varieties have finite--dimensional motive.
  \end{conjecture}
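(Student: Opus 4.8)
The statement is Kimura's conjecture, which is a major open problem; no proof is known, so what follows is an honest account of the only viable line of attack together with the essential obstruction. The one general mechanism we possess for establishing finite--dimensionality is reduction to abelian varieties: by Kimura's results the motive $h(A)$ of an abelian variety $A$ is finite--dimensional, and finite--dimensionality is stable under the basic operations of $\MM_{\rm rat}$ — direct sums, tensor products, and passage to direct summands (retracts). In particular, if a variety $X$ is dominated by some $Y$ with finite--dimensional motive, then $h(X)$ is a retract of $h(Y)$ and hence again finite--dimensional. Consequently the full tensor subcategory generated by motives of abelian varieties (equivalently, by motives of curves, since $h^1$ of a curve generates its Jacobian) consists entirely of finite--dimensional objects.

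The plan would therefore be to show that for an \emph{arbitrary} smooth projective $X$ the motive $h(X)$ lies in this subcategory — concretely, that $h(X)$ is a direct summand, up to Tate twists, of the motive of a product of curves (for instance because $X$ admits a dominant rational map from such a product). This is exactly the structure exploited for the Fermat cubic via Shioda's inductive construction and, in the present note, for the cubics of theorem \ref{main}. One would first split off the Lefschetz/Tate summands attached to powers of an ample class, reduce to the primitive transcendental part of $h(X)$, and then attempt to realize that part as a retract of an abelian motive, producing the sought decomposition $h(X)=h^+(X)\oplus h^-(X)$ into an evenly and an oddly finite--dimensional piece.

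The decisive obstacle — and the reason conjecture \ref{findim} remains open — is that this subcategory is far too small to contain every motive. Conjecturally, and provably in many cases granting the standard conjectures, the motive of a general surface of general type with $p_g>0$, or of a general hypersurface of large degree, is \emph{not} of abelian type, so no reduction to curves or abelian varieties can possibly exist. Outside the abelian realm there is at present no construction whatsoever of a finite--dimensional decomposition of $h(X)$, and supplying one for an arbitrary $X$ would require controlling the entire Chow ring $A^\ast(X)$ to an extent that subsumes both the standard conjectures and the Bloch--Beilinson conjectures. Thus conjecture \ref{findim} is expected to be of the same order of difficulty as the deepest open problems on algebraic cycles; the contribution of this note is to enlarge the list of varieties for which the reduction to abelian motives can actually be carried out, not to advance a general proof.
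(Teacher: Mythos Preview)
There is nothing to compare here: the statement is a \emph{conjecture}, and the paper does not prove it. Immediately after stating conjecture~\ref{findim}, the paper writes ``We are still far from knowing this, but at least there are quite a few non--trivial examples'', and then lists known examples together with the caveat that all of them lie in the abelian subcategory. Your write--up correctly identifies the statement as open, accurately describes the only known mechanism (reduction to motives of abelian type via domination and closure of finite--dimensionality under retracts and tensor operations), and correctly pinpoints the obstruction --- that motives such as that of a very general quintic in $\PP^3$ are not of abelian type, which is exactly the remark the paper makes citing \cite[7.6]{Del}. So your account is faithful to the paper's own discussion; there is simply no proof on either side to assess.
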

  
   We are still far from knowing this, but at least there are quite a few non--trivial examples:
 
\begin{remark} 
The following varieties have finite--dimensional motive: abelian varieties, varieties dominated by products of curves \cite{Kim}, $K3$ surfaces with Picard number $19$ or $20$ \cite{P}, surfaces not of general type with $p_g=0$ \cite[Theorem 2.11]{GP}, certain surfaces of general type with $p_g=0$ \cite{GP}, \cite{PW}, \cite{V8}, Hilbert schemes of surfaces known to have finite--dimensional motive \cite{CM}, generalized Kummer varieties \cite[Remark 2.9(\rom2)]{Xu},
 threefolds with nef tangent bundle \cite{Iy} (an alternative proof is given in \cite[Example 3.16]{V3}), fourfolds with nef tangent bundle \cite{Iy2}, log--homogeneous varieties in the sense of \cite{Br} (this follows from \cite[Theorem 4.4]{Iy2}), certain threefolds of general type \cite[Section 8]{V5}, varieties of dimension $\le 3$ rationally dominated by products of curves \cite[Example 3.15]{V3}, varieties $X$ with $A^i_{AJ}(X)_{}=0$ for all $i$ \cite[Theorem 4]{V2}, products of varieties with finite--dimensional motive \cite{Kim}.
\end{remark}

\begin{remark}
It is an embarrassing fact that up till now, all examples of finite-dimensional motives happen to lie in the tensor subcategory generated by Chow motives of curves, i.e. they are ``motives of abelian type'' in the sense of \cite{V3}. On the other hand, there exist many motives that lie outside this subcategory, e.g. the motive of a very general quintic hypersurface in $\PP^3$ \cite[7.6]{Del}.
\end{remark}

\section{Main result}

\begin{theorem}\label{main} The following cubics have finite--dimensional motive (of abelian type):

\noindent
(\rom1) a smooth cubic $X\subset\PP^{5r+4}(\C)$ given by an equation
  \[  f_0(x_0,\ldots,x_4)+f_1(x_5,\ldots,x_9)+\cdots + f_r(x_{5r},\ldots,x_{5r+4})  =0\ ,\]
  where the $f_i$ define smooth cubics;
  
  \noindent
  (\rom2) a smooth cubic $X\subset\PP^{5r+5}(\C)$ given by an equation
   \[ f_0(x_0,\ldots,x_4)+f_1(x_5,\ldots,x_9)+\cdots + f_r(x_{5r},\ldots,x_{5r+4}) + (x_{5r+5})^3 =0\ ,\]   
     where the $f_i$ define smooth cubics;
   
  \noindent
  (\rom3) a smooth cubic $X\subset\PP^{5r+6}(\C)$ given by an equation 
   \[ f_0(x_0,\ldots,x_4)+f_1(x_5,\ldots,x_9)+\cdots + f_r(x_{5r},\ldots,x_{5r+4}) + f_{r+1}(x_{5r+5},x_{5r+6}) =0\ ,\]   
    where the $f_i$ define smooth cubics;

  \noindent
  (\rom4) a smooth cubic $X\subset\PP^{5r+2}(\C)$ given by an equation 
   \[  f_0(x_0,\ldots,x_4)+f_1(x_5,\ldots,x_9)+\cdots + f_{r-1}(x_{5r-5},\ldots,x_{5r-1}) + f_{r}(x_{5r},x_{5r+1},x_{5r+2}) =0\ ,\]   
    where the $f_i$ define smooth cubics;     
    
   \noindent
   (\rom5)  a smooth cubic $X\subset\PP^{5r+3}(\C)$ given by an equation 
   \[  f_0(x_0,\ldots,x_4)+f_1(x_5,\ldots,x_9)+\cdots + f_{r-1}(x_{5r-5},\ldots,x_{5r-1}) + f_{r}(x_{5r},\ldots,x_{5r+3}) =0\ ,\]   
    where the $f_i$ define smooth cubics;        
   
    \end{theorem}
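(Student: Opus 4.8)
The plan is to treat the five cases (\rom1)--(\rom5) simultaneously, since they differ only in the nature of the last summand: a smooth cubic of dimension $3$, $2$, $1$, $0$, or the degenerate block $(x)^3$. The whole point of the case distinction is simply to exhaust the residues of $\dim X$ modulo $5$, so a single argument should cover all of them. For any smooth cubic $X\subset\PP^{N}$ I would first dispose of the ``trivial'' part of the motive: intersecting with hyperplanes, the Lefschetz decomposition is algebraic, so one has a Chow--K\"unneth splitting $h(X)=(\text{sum of Tate motives})\oplus h_{\mathrm{prim}}(X)$ in which every summand but the primitive middle piece $h_{\mathrm{prim}}(X)$ is Tate, hence finite--dimensional of abelian type. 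It therefore suffices to prove that $h_{\mathrm{prim}}(X)$ is finite--dimensional.

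The engine is Shioda's inductive structure \cite{Sh}, \cite{KS}. Writing $X=V(f\oplus g)$ as a Shioda sum of two cubics $V(f)$, $V(g)$ in disjoint sets of variables, I would realize $X$ as a cyclic triple cover of the product $\PP^a\times\PP^b$ of the two ambient projective spaces: the projection $[x:y]\mapsto([x],[y])$ has generic fibre $\{[\lambda:\mu]\in\PP^1 : \lambda^3 f(x)+\mu^3 g(y)=0\}$, and $\mu_3$ acts by $[x:y]\mapsto[\omega x:y]$, with branch locus $(V(f)\times\PP^b)\cup(\PP^a\times V(g))$. Decomposing $h(X)$ into $\mu_3$--eigenspaces, the invariant part is $h(\PP^a\times\PP^b)$ (Tate), while the two remaining eigenspaces are cut out, through algebraic correspondences, from the motive of the branch locus, i.e.\ from Tate twists of summands of $h(V(f))$, $h(V(g))$ and $h(V(f))\otimes h(V(g))$. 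Since finite--dimensionality (of abelian type) is stable under direct sums, direct summands, tensor products and Tate twists, this reduces finite--dimensionality of $X$ to that of the two summand cubics $V(f)$ and $V(g)$.

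Iterating this peeling over the blocks $f_0,\ldots,f_r$ reduces the theorem to the finite--dimensionality of (a) the individual cubic threefold blocks $V(f_i)\subset\PP^4$ and (b) the final cubic of dimension $<3$. For (a) I would invoke the main result of \cite{excubic4}: the cubic fourfold $V\!\big(f_i\oplus (x)^3\big)\subset\PP^5$ is precisely a cyclic triple cover of $\PP^4$ branched over $V(f_i)$, hence lies in the $10$--dimensional family treated there and is finite--dimensional of abelian type; the same $\mu_3$--eigenspace bookkeeping then transfers finite--dimensionality back from this fourfold to the threefold $V(f_i)$ itself. For (b) the final cubic is a cubic surface (rational), an elliptic curve (an abelian variety), a triple of points, or empty, all of which are trivially finite--dimensional of abelian type. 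Assembling the pieces shows $h_{\mathrm{prim}}(X)$, and hence $h(X)$, is finite--dimensional of abelian type.

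The main obstacle is the second step: upgrading Shioda's relation from a statement about \emph{cohomology} (a Thom--Sebastiani / eigenspace decomposition) to an honest identity of \emph{Chow} motives realized by explicit algebraic correspondences, so that finite--dimensionality --- a property of the integral Chow motive, not merely of its realization --- genuinely propagates along the induction. For the Fermat cubic this is painless, as it is literally dominated by a product of Fermat curves; for arbitrary smooth blocks $f_i$ one must instead control the cover at the level of cycles, and this is exactly the cycle--theoretic input imported from \cite{excubic4}. Carrying the Tate twists and the ``of abelian type'' label correctly through each application of the inductive step is the only place where genuine care is required.
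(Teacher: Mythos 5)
You have put your finger on the fatal point yourself and then not resolved it. The entire plan rests on the claim that for a Shioda join $X=V(f\oplus g)$ the \emph{Chow} motive decomposes under the $\mu_3$--action into $h(\PP^a\times\PP^b)$ plus eigenspaces cut out by algebraic correspondences from Tate twists of $h(V(f))$, $h(V(g))$ and $h(V(f))\otimes h(V(g))$. No such motivic Thom--Sebastiani statement exists in the sources you invoke: \cite{excubic4} treats only the special fourfolds $f(x_0,\ldots,x_4)+x_5^3=0$ (triple covers of $\PP^4$ branched along a cubic threefold), and its cycle--theoretic argument does not generalize for free to joins in which both blocks are positive--dimensional. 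Calling this ``the cycle--theoretic input imported from \cite{excubic4}'' is exactly where the proof is missing, and since finite--dimensionality is a property of the Chow motive that is not detected by its realization, a cohomological eigenspace decomposition cannot be substituted for it. Your bookkeeping is moreover wrong already in cohomology: the $3{:}1$ projection $X\dashrightarrow\PP^a\times\PP^b$ is only a rational map (undefined along the loci $V(g)=X\cap\{x=0\}$ and $V(f)=X\cap\{y=0\}$), so $X/\mu_3$ is merely \emph{birational} to $\PP^a\times\PP^b$ and the invariant part of $h(X)$ is strictly larger than $h(\PP^a\times\PP^b)$. For instance, for the Fermat cubic fourfold split into two three--variable blocks, the characters $(a_0,\ldots,a_5)$, $a_i\in\{1,2\}$, indexing $H^4_{prim}$ contain $\mu_3$--invariant ones (those with $a_0+a_1+a_2\equiv 0 \bmod 3$), so invariant primitive cohomology is nonzero.

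The paper avoids this machinery entirely, and you should too: Katsura--Shioda \cite[Remark 1.10]{KS} provide a \emph{dominant rational map} $\phi\colon X_1\times X_2\dashrightarrow X$, where (in case (\rom1), say) $X_1=\{f_0+\cdots+f_{r-1}+y^3=0\}$ and $X_2=\{f_r+z^3=0\}$, whose indeterminacy is resolved by blowing up $Y_1\times Y_2$ with $Y_i$ again lower--dimensional cubics of the same diagonal shape. Since finite--dimensionality (of abelian type) is stable under products, under blow--ups along centers with finite--dimensional motive, and under passing to a variety dominated by one with finite--dimensional motive, an induction on $r$ over the five cases goes through with no eigenspace analysis at all; the base cases are cubic threefolds and fivefolds (which have $A^\ast_{AJ}=0$, hence finite--dimensional motive by \cite{V2}), the fourfold family of \cite{excubic4} --- used as a black box for case (\rom2), not re-derived --- and curves and del Pezzo surfaces. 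Note also that your detour of deducing the threefold case from the fourfold case of \cite{excubic4} via eigenspace transfer is both unproven and unnecessary: cubic threefolds are finite--dimensional directly. If you want to salvage your approach, you would have to construct the projectors realizing the $\mu_3$--eigenspace decomposition in $\MM_{\rm rat}$, which is essentially harder than the domination argument it was meant to replace.
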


  \begin{proof}
   The proof uses Shioda's inductive structure, in the guise of the following proposition (this is \cite[Remark 1.10]{KS}):
  
  \begin{proposition}[Katsura--Shioda \cite{KS}]\label{sh} Let $Z\subset \PP^{m_1+m_2}$ be a smooth hypersurface of degree $d$ defined by an equation
    \[ g_1(x_0,\ldots,x_{m_1})+g_2(x_{m_1+1},\ldots,x_{m_1+m_2})=0\ .\]
   Let $Z_1$ resp. $Z_2$ be the smooth hypersurfaces of dimension $m_1$ resp. $m_2-1$, defined as
     \[ g_1(x_0,\ldots,x_{m_1})+y^d=0\ ,\]
     resp.
     \[ g_2(x_{m_1+1},\ldots,x_{m_1+m_2})+z^d=0\ .\]   
     Then there exists a dominant rational map
     \[ \phi\colon\ \ Z_1\times Z_2\ \dashrightarrow\ Z\ ,\]
     and the indeterminacy of $\phi$ is resolved by blowing up the locus
     \[  \Bigl( Z_1\cap (y=0)\Bigr)\times \Bigl( Z_2\cap (z=0)\Bigr)\ \subset\ Z_1\times Z_2\ .\]
     \end{proposition}
   
 Let us first check theorem \ref{main} is true for $r=0$. In case (\rom1), this is clear since any cubic threefold has $A^\ast_{AJ}(X)=0$ and so has finite--dimensional motive. In case (\rom2), the fourfold $X$ has finite--dimensional motive thanks to \cite{excubic4}. In case (\rom3), we use that any cubic fivefold has $A^\ast_{AJ}(X)=0$ and so has finite--dimensional motive. In cases (\rom4) and (\rom5), the finite--dimensionality is again clear, since any curve and any del Pezzo surface has finite--dimensional motive.
 
 Next, let us suppose theorem \ref{main} is true for $r-1$, and let us prove this implies theorem \ref{main} for $r$.
 
 Let $X\subset \PP^{5r+4}$ be a cubic as in (\rom1). According to proposition \ref{sh}, there is a dominant rational map
   \[ \phi\colon\ \ X_1\times X_2\ \dashrightarrow\ X\ ,\]
   where 
   \[ \begin{split}  X_1 &= \bigl\{ f_0 + f_1 +\cdots +f_{r-1} + y ^3=0\bigr\}\ ,\\
                           X_2 &=\bigl\{ f_r + z^3=0\bigr\}\ .\\
                        \end{split}\]
           By induction, $X_1$ and $X_2$ have finite--dimensional motive. The indeterminacy of the rational map $\phi$ is resolved by blowing up  $Y_1\times Y_2\subset X_1\times X_2$, where $Y_1=X_1\cap (y=0)$ and $Y_2=X_2\cap (z=0)$. By induction, $Y_1$ and $Y_2$ have finite--dimensional motive, and so $X$ (being dominated by something with finite--dimensional motive) has finite--dimensional motive.
           
     Let $X\subset \PP^{5r+5}$ be a cubic as in (\rom2). According to proposition \ref{sh}, there is a dominant rational map
   \[ \phi\colon\ \ X_1\times X_2\ \dashrightarrow\ X\ ,\]
   where 
   \[ \begin{split}  X_1 &= \bigl\{ f_0 + f_1 +\cdots +f_{r-1} + y ^3=0\bigr\}\ ,\\
                           X_2 &=\bigl\{ f_r + (x_{5r+5})^3 + z^3=0\bigr\}\ .\\
                        \end{split}\]
           By induction, $X_1$ and $X_2$ have finite--dimensional motive. The indeterminacy of the rational map $\phi$ is resolved by blowing up  $Y_1\times Y_2\subset X_1\times X_2$, where $Y_1=X_1\cap (y=0)$ and $Y_2=X_2\cap (z=0)$. By induction, $Y_1$ and $Y_2$ have finite--dimensional motive, and so $X$ (being dominated by something with finite--dimensional motive) has finite--dimensional motive.
                         
    Let $X\subset \PP^{5r+6}$ be a cubic as in (\rom3). Applying proposition \ref{sh}, we find a dominant rational map
   \[ \phi\colon\ \ X_1\times X_2\ \dashrightarrow\ X\ ,\]
   where 
   \[ \begin{split}  X_1 &= \bigl\{ f_0 + f_1 +\cdots +f_{r} + y ^3=0\bigr\}\ ,\\
                           X_2 &=\bigl\{ f_{r+1}(x_{5r+5},x_{5r+6})+ z^3=0\bigr\}\ .\\
                        \end{split}\]
       The cubic  $X_1$ has finite--dimensional motive because we have just proven (\rom2) for $r$, and $X_2$ has finite--dimensional motive by the induction base. The indeterminacy of the rational map $\phi$ is resolved by blowing up  $Y_1\times Y_2\subset X_1\times X_2$, where $Y_1=X_1\cap (y=0)$ and $Y_2=X_2\cap (z=0)$. The cubics $Y_1$ and $Y_2$ have finite--dimensional motive (by (\rom1) for $r$ and the induction base), and so $X$ has finite--dimensional motive.
                         
 Next, let $X\subset \PP^{5r+2}$ be a cubic as in (\rom4). Applying proposition \ref{sh}, we find a dominant rational map
   \[ \phi\colon\ \ X_1\times X_2\ \dashrightarrow\ X\ ,\]
   where 
   \[ \begin{split}  X_1 &= \bigl\{ f_0 + f_1 +\cdots +f_{r-1} + y ^3=0\bigr\}\ ,\\
                           X_2 &=\bigl\{ f_{r}(x_{5r},x_{5r+1},x_{5r+2})+ z^3=0\bigr\}\ .\\
                        \end{split}\]
       The cubics  $X_1$ and $X_2$ have finite--dimensional motive by induction. The indeterminacy of the rational map $\phi$ is resolved by blowing up  $Y_1\times Y_2\subset X_1\times X_2$, where $Y_1=X_1\cap (y=0)$ and $Y_2=X_2\cap (z=0)$. The cubics $Y_1$ and $Y_2$ have finite--dimensional motive (by induction), 
       and so $X$ has finite--dimensional motive.
                         
 Finally, let $X\subset\PP^{5r+3}(\C)$ be as in (\rom5). There is a dominant rational map
     \[ \phi\colon\ \ X_1\times X_2\ \dashrightarrow\ X\ ,\]
   where 
   \[ \begin{split}  X_1 &= \bigl\{ f_0 + f_1 +\cdots +f_{r-1} + y ^3=0\bigr\}\ ,\\
                           X_2 &=\bigl\{ f_{r}(x_{5r},\ldots,x_{5r+3})+ z^3=0\bigr\}\ .\\
                        \end{split}\]
          The indeterminacy of $\phi$ is resolved by blowing up $Y_1\times Y_2\subset X_1\times X_2$, where $Y_1=X_1\cap (y=0)$ and $Y_2=X_2\cap (z=0)$. The varieties $X_1, X_2, Y_1, Y_2$ all have finite--dimensional motive, and so $X$ has finite--dimensional motive. This closes the proof.

          \end{proof}

 \begin{remark} 
 In \cite{moiq}, using a similar argument I prove that certain quartic hypersurfaces have finite--dimensional motive.
 
 We also mention, in passing, the following result concerning Chow groups of cubics: 
 Let $X\subset\PP^{n+1}(\C)$ be a smooth cubic of type
   \[   f_0(x_0,x_1,x_2) + f_1(x_3,x_4,x_5) + \cdots + f_r(x_{3r},x_{3r+1},x_{3r+2}) + f_{r+1}(x_{3r+3},\ldots,x_{n+1}) =0\ , \]
 where $r=\lfloor {n+1\over 3}\rfloor$. Then Colliot--Th\'el\`ene \cite{CT} has proven (exploiting the Shioda trick) that $X$ has universally trivial Chow group of $0$--cycles (i.e., there is an integral decomposition of the diagonal).

 \end{remark}

 \section{The Fano variety of lines}

 \begin{corollary} Let $X\subset\PP^{n+1}(\C)$ be a cubic as in theorem \ref{main}, and let $F(X)$ denote the Fano variety of lines on $X$. Then $F(X)$ has finite--dimensional motive (of abelian type).
 \end{corollary}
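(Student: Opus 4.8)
The plan is to derive finite--dimensionality of $h(F(X))$ from that of the cubic $X$ (Theorem \ref{main}), by realizing $h(F(X))$ as a direct summand of a Tate twist of the motive of the Hilbert square $X^{[2]}$. Throughout I use that the collection of finite--dimensional motives of abelian type is closed under direct sums, tensor products, direct summands and Tate twists \cite{Kim}; in particular, since $\LLL$ is invertible in $\MM_{\rm rat}$, a motive $M$ is finite--dimensional of abelian type if and only if some (equivalently, any) twist $M(j)$ is.

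First I would check that $h(X^{[2]})$ is finite--dimensional of abelian type. As $X$ is smooth of dimension $n$, the Hilbert square $X^{[2]}$ is smooth, and it is the quotient by the natural involution $\sigma$ of the blow--up $\wt{X\times X}$ of $X\times X$ along the diagonal $\Delta\cong X$. The blow--up formula gives
\[ h(\wt{X\times X})\ \cong\ h(X)\otimes h(X)\ \oplus\ \bigoplus_{i=1}^{n-1} h(X)(-i)\ , \]
which is finite--dimensional of abelian type because $h(X)$ is, by Theorem \ref{main}. Since $h(X^{[2]})$ is the $\sigma$--invariant part of $h(\wt{X\times X})$, cut out by the projector ${1\over 2}(1+\sigma_\ast)$, it is a direct summand of the above and hence again finite--dimensional of abelian type.

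Next I would invoke the motivic form of the Galkin--Shinder relation: an isomorphism in $\MM_{\rm rat}$
\[ h(X^{[2]})\ \cong\ h(F(X))(-2)\ \oplus\ M\ , \]
with $M$ a finite direct sum of Tate twists of $h(X)$. Granting this, $h(F(X))(-2)$ is a direct summand of the finite--dimensional abelian--type motive $h(X^{[2]})$, so it too is finite--dimensional of abelian type; untwisting, the same holds for $h(F(X))$, which is the assertion.

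The main obstacle is exactly this last input. The Galkin--Shinder identity is a priori only an equality of classes in the Grothendieck ring of varieties, obtained by stratifying $X^{[2]}$ according to whether the line spanned by a length--two subscheme lies on $X$ (the locus of such subschemes being a $\PP^2$--bundle over $F(X)$) or meets $X$ in a residual third point. Promoting it to a splitting of Chow motives requires realizing the decomposition by explicit, mutually orthogonal correspondences --- in effect the incidence correspondence between $F(X)$ and $X^{[2]}$ together with the correspondences furnished by the residual--point construction --- and verifying that the associated idempotent carves out precisely $h(F(X))(-2)$. Once this Chow--motivic decomposition is secured, the finite--dimensionality of $h(F(X))$ follows formally.
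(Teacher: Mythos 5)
Your proposal is correct and takes essentially the same route as the paper: the paper's proof is a one--line citation of \cite{fanocubic}, and that reference contains precisely the argument you sketch, namely the Chow--motivic lifting of the Galkin--Shinder relation realizing $h(F(X))(-2)$ as a direct summand of $h(X^{[2]})$, combined with the blow--up/quotient description expressing $h(X^{[2]})$ in terms of $h(X)$ and Tate twists. The ``main obstacle'' you honestly flag --- promoting the Grothendieck--ring identity to a splitting of Chow motives via explicit correspondences --- is exactly the theorem established in \cite{fanocubic}, so your argument is complete once that citation is supplied.
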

 
 \begin{proof} This follows from \cite{fanocubic}.
 \end{proof}
 
 \begin{corollary}\label{CK} Let $X\subset\PP^{n+1}(\C)$ be a cubic as in theorem \ref{main}, and let $F(X)$ denote the Fano variety of lines on $X$. Then there exists a Chow--K\"unneth decomposition for $F(X)$, i.e. a set of mutually orthogonal idempotents in $A^{2n-4}(F(X)\times F(X))$ summing to the diagonal and lifting the K\"unneth components.
 \end{corollary}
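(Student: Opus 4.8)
The plan is to build the decomposition in two stages: first produce mutually orthogonal idempotents lifting the Künneth components \emph{modulo homological equivalence} (that is, to verify the Künneth-type standard conjecture for $F(X)$), and then to promote these to honest idempotents in the Chow ring by exploiting finite--dimensionality. Throughout I would write $d=\dim F(X)=2n-4$, so that self-correspondences live in $A^{d}(F(X)\times F(X))=A^{2n-4}(F(X)\times F(X))$ and the diagonal $\Delta_{F(X)}$ lies in this group, as required by the statement.

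For the first stage, recall from the preceding corollary that $h(F(X))$ is finite--dimensional of abelian type, i.e. it is a direct summand of a sum of Tate twists of motives of abelian varieties. Since the Künneth standard conjecture is known for abelian varieties and, in the form needed here, descends to the abelian subcategory (cf. \cite{MNP}), the Künneth components of the diagonal of $F(X)$ are algebraic: there exist cycles $\Gamma_0,\ldots,\Gamma_{2d}\in A^{2n-4}(F(X)\times F(X))$ whose cohomology classes are the projectors $\pi_i$ onto $H^{i}(F(X))$. Because the $\pi_i$ are mutually orthogonal idempotents in cohomology summing to the class of the diagonal, the $\Gamma_i$ are mutually orthogonal idempotents modulo homological equivalence, and $\sum_i\Gamma_i-\Delta_{F(X)}$ is homologically trivial.

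For the second stage, I would invoke the nilpotence theorem. Let $I\subset A^{2n-4}(F(X)\times F(X))$ be the two-sided ideal (for the composition product) of homologically trivial self-correspondences; this is an ideal because composing with a homologically trivial cycle kills cohomology classes. Since homological triviality implies numerical triviality, Theorem \ref{nilp} (applicable as $F(X)$ has finite--dimensional motive) shows that every element of $I$ is nilpotent, so $I$ is a nil ideal. A complete orthogonal family of idempotents lifts along the quotient by a nil ideal, so the classes of the $\Gamma_i$ in $A^{2n-4}(F(X)\times F(X))/I$ — which are exactly the mutually orthogonal idempotents summing to the diagonal obtained in the first stage — lift to mutually orthogonal idempotents $\Pi_0,\ldots,\Pi_{2d}\in A^{2n-4}(F(X)\times F(X))$ with $\sum_i\Pi_i=\Delta_{F(X)}$ and $\Pi_i\equiv\Gamma_i$ modulo $I$, hence $[\Pi_i]=\pi_i$ in cohomology (cf. the standard idempotent-lifting argument in \cite{J4}). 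This family $\{\Pi_i\}$ is the desired Chow--Künneth decomposition.

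The only non-formal input is the first stage, namely the algebraicity of the Künneth projectors of $F(X)$; this is exactly where the abelian--type hypothesis is essential, since outside the abelian subcategory the Künneth standard conjecture is open. The main obstacle will therefore be the precise descent of the Künneth conjecture from abelian varieties to the summand $h(F(X))$ (keeping careful track of the Tate twists, since a summand inclusion of motives need not be graded), together with the routine degree bookkeeping that places the projectors in $A^{2n-4}(F(X)\times F(X))$. Once these are secured, the nilpotence-theoretic lifting is entirely formal and, by the same nil-ideal mechanism, automatically respects both mutual orthogonality and the sum to the diagonal.
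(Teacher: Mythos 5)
Your proof is correct, but its first stage takes a genuinely different route from the paper's. The paper establishes algebraicity of the K\"unneth projectors of $F(X)$ via the Lefschetz standard conjecture: by \cite{fanocubic} there is a direct summand inclusion $h(F(X))\subset h(X^{[2]})(-2)$ in $\MM_{\rm hom}$, the Hilbert square $X^{[2]}$ satisfies conjecture $B$ (being built from the hypersurface $X$ by product, blow-up and quotient), so $F(X)$ satisfies $B$ as well, and Kleiman's implication $B\Rightarrow C$ \cite{K} gives algebraic K\"unneth components. You instead exploit the ``abelian type'' conclusion of the preceding corollary: $h(F(X))$ is a summand of a sum of Tate twists of motives of abelian varieties, whose K\"unneth projectors are classically algebraic, and you transport these projectors across the summand inclusion. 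That works, and the worry you raise about the inclusion not being graded is in fact vacuous: a morphism of motives $(X,p,m)\to(Y,q,n)$ automatically preserves the motive grading on cohomology (it maps $H^{j+2m}(X)$ to $H^{j+2n}(Y)$), so if $\iota,\pi$ exhibit $h(F(X))$ as a summand of $M$ with $\pi\circ\iota=\mathrm{id}$, then $\pi\circ\pi^M_j\circ\iota$ acts on $H^\ast(F(X))$ as projection onto $H^j$, which is exactly the needed algebraicity. The trade-off: the paper's route yields the stronger conclusion that $F(X)$ satisfies the full Lefschetz standard conjecture (not merely K\"unneth), at the cost of the $B$-stability machinery for $X^{[2]}$; your route is more self-contained granted the abelian-type statement, though note that both arguments ultimately rest on the same input \cite{fanocubic}. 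Your second stage --- nilpotence of the nil ideal of homologically trivial self-correspondences via Theorem \ref{nilp}, followed by lifting a complete orthogonal family of idempotents modulo a nil ideal --- is exactly the paper's, which cites \cite[Lemma 3.1]{J2} for the lifting step.
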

 
 \begin{proof} Thanks to the inclusion as a direct summand
    \[  h(F(X))\ \subset\ h(X^{[2]})(-2)\ \ \ \hbox{in}\ \MM_{\rm hom}\ \]
    (where $X^{[2]}$ is the Hilbert scheme)
    \cite{fanocubic}, 
    we know that $F(X)$ satisfies the Lefschetz standard conjecture (since $X^{[2]}$ does so). In particular, the K\"unneth components of the diagonal of $F(X)$ are algebraic \cite{K}. Nilpotence then allows to lift the K\"unneth components to a Chow--K\"unneth decomposition \cite[Lemma 3.1]{J2}.
  \end{proof}
  
 \begin{remark} The existence of a Chow--K\"unneth decomposition for {\em all\/} smooth projective varieties is conjectured by Murre \cite{Mur}.  
 For {\em any\/} cubic fourfold $X$, Shen and Vial \cite{SV} have explicitly constructed a Chow--K\"unneth decomposition for the Fano variety of lines $F(X)$. The argument is very different, since finite--dimensionality is not known for the Fano variety of a general cubic fourfold.
 \end{remark}

\section{Voevodsky's conjecture}

\begin{definition}[Voevodsky \cite{Voe}]\label{sm} Let $X$ be a smooth projective variety. A cycle $a\in A^r(X)$ is called {\em smash--nilpotent\/} 
if there exists $m\in\NN$ such that
  \[ \begin{array}[c]{ccc}  a^m:= &\undermat{(m\hbox{ times})}{a\times\cdots\times a}&=0\ \ \hbox{in}\  A^{mr}(X\times\cdots\times X)_{}\ .
  \end{array}\]
  \vskip0.6cm

Two cycles $a,a^\prime$ are called {\em smash--equivalent\/} if their difference $a-a^\prime$ is smash--nilpotent. We will write $A^r_\otimes(X)\subset A^r(X)$ for the subgroup of smash--nilpotent cycles.
\end{definition}

\begin{conjecture}[Voevodsky \cite{Voe}]\label{voe} Let $X$ be a smooth projective variety. Then
  \[  A^r_{num}(X)\ \subset\ A^r_\otimes(X)\ \ \ \hbox{for\ all\ }r\ .\]
  \end{conjecture}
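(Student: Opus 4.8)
Since smash--nilpotent cycles are always homologically, hence numerically, trivial, the inclusion $A^r_\otimes(X)\subset A^r_{num}(X)$ holds unconditionally, so the entire content of conjecture \ref{voe} is the reverse inclusion. My plan is to attack it through finite--dimensionality. Granting Kimura's conjecture \ref{findim}, every $h(X)$ is finite--dimensional, and (combining the nilpotence theorem \ref{nilp} with the K\"unneth standard conjecture to split off the K\"unneth projectors, then lifting via Jannsen's lemma \cite[Lemma 3.1]{J2}) one obtains a Chow--K\"unneth decomposition $h(X)=\bigoplus_i h^i(X)$. As this decomposition is orthogonal also for numerical equivalence, a numerically trivial cycle splits into numerically trivial pieces $a_i\in A^r(h^i(X))$, and it suffices to prove each $a_i$ smash--nilpotent. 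This reduces conjecture \ref{voe} to the two parity cases below.

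For odd $i$ the summand $h^i(X)$ is oddly finite--dimensional, i.e. $\mathrm{Sym}^N h^i(X)=0$ for some $N$. Here the mechanism behind Voevodsky's theorem that algebraically trivial cycles are smash--nilpotent \cite{Voe} generalises: a numerically (equivalently, for a finite--dimensional motive, homologically) trivial class in an oddly finite--dimensional motive is smash--nilpotent. The prototype is a point difference $c-c_0$ on a curve, which lives in the oddly finite--dimensional motive $h^1(C)$ and satisfies $(c-c_0)^{\times m}=0$ for $m\gg 0$; the general statement is obtained by transporting this vanishing along the structural relation $\mathrm{Sym}^N=0$. This is exactly the input that drives corollary \ref{smash}: for an odd--dimensional cubic as in theorem \ref{main} every numerically trivial cycle is forced into the oddly finite--dimensional middle piece $h^n(X)$, all other summands being Lefschetz motives $\LLL^{i/2}$ carrying no numerically trivial cycles.

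The hard part --- and the reason conjecture \ref{voe} is open rather than a theorem --- is the even summands. For even $i$ the motive $h^i(X)$ is evenly finite--dimensional, $\wedge^N h^i(X)=0$, and here the external power $a_i^{\times m}$ lands in the symmetric tower $\mathrm{Sym}^m h^i(X)$, which the hypothesis $\wedge^N=0$ does not annihilate; finite--dimensionality delivers vanishing in precisely the \emph{wrong} tower. No substitute for Voevodsky's curve argument is known in this symmetric (``even'') situation, and it is exactly the numerically trivial classes in even middle cohomology --- the Griffiths--type cycles --- that resist every current technique; indeed this is why conjecture \ref{voe} is open even for abelian varieties, whose even--degree summands obstruct the same step. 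Thus a complete proof would require both Kimura's conjecture \ref{findim}, itself open, and a genuinely new idea for evenly finite--dimensional motives, which I do not expect to follow from finite--dimensionality alone. Realistically the method settles conjecture \ref{voe} only when all numerically trivial cycles can be confined to odd summands, which is exactly the odd--dimensional situation of corollary \ref{smash}.
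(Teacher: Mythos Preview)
The statement you were asked to address is conjecture~\ref{voe}, which the paper records as an \emph{open conjecture}; there is no proof in the paper to compare against. You recognise this clearly and honestly, and your discussion of the obstructions is accurate: the reduction via a (conditional) Chow--K\"unneth decomposition, the fact that oddly finite--dimensional summands are handled by Kimura's result, and the genuine gap for evenly finite--dimensional summands are all correctly identified. In that sense your ``proof proposal'' is really a well--informed status report rather than a proof, and that is the right thing to submit here.

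Two small remarks. First, your treatment of the odd case is exactly the mechanism the paper uses to prove corollary~\ref{smash}: one isolates the numerically trivial cycles in the oddly finite--dimensional middle piece $h_n(X)$ and then invokes Kimura's \cite[Proposition~6.1]{Kim}. Note, however, that Kimura's proposition is slightly stronger than you state: for an oddly finite--dimensional motive $M$ one has $A^r(M)\subset A^r_\otimes(M)$ for \emph{all} classes, not only the numerically trivial ones; you do not need to first pass to $A^r_{num}$. Second, your conditional reduction assumes not only Kimura's conjecture~\ref{findim} but also the K\"unneth standard conjecture (to get algebraic K\"unneth components before lifting via \cite[Lemma~3.1]{J2}); you flag this, but it is worth stressing that even granting both inputs the even case remains genuinely open, so the strategy as written cannot close the argument.
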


\begin{remark} It is known \cite[Th\'eor\`eme 3.33]{An} that conjecture \ref{voe} implies (and is strictly stronger than) conjecture \ref{findim}. For partial results concerning conjecture \ref{voe}, cf. \cite{Seb2}, \cite{Seb}, \cite[Theorem 3.17]{V2}.
\end{remark}

As a corollary of finite--dimensionality, we can verify Voevodsky's conjecture for all odd--dimensional cubics as in theorem \ref{main}:

\begin{corollary}\label{smash} Let $X\subset\PP^{n+1}(\C)$ be a smooth cubic as in theorem \ref{main}. Assume $n$ is odd. Then
  \[  A^r_{num}(X)\ \subset\ A^r_\otimes(X)\ \ \ \hbox{for\ all\ }r\ .\]
  \end{corollary}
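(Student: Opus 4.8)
The plan is to combine the finite--dimensionality provided by theorem \ref{main} with the very simple shape of the cohomology of a smooth cubic when $n$ is odd. First I would write down the Lefschetz decomposition of the motive of the hypersurface $X$,
\[
 h(X)\ =\ \Bigl(\bigoplus_{i=0}^{n}\LLL^{\,i}\Bigr)\ \oplus\ t(X)\ \ \ \hbox{in}\ \MM_{\rm rat}\ ,
\]
where $\LLL$ is the Lefschetz motive and $t(X)$ is the primitive part living in the middle dimension; the projectors onto the Lefschetz summands are algebraic because powers $h^i$ of a hyperplane section (together with a point) exhibit them explicitly. Since $n$ is odd we have $H^n(\PP^{n+1})=0$, so by the Lefschetz hyperplane theorem and Poincar\'e duality the cohomology $H^\ast(X)$ is concentrated in even degrees, the only exception being $H^n(X)=H^\ast(t(X))$, which sits in the odd degree $n$.

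The first key step is to show that $t(X)$ is oddly finite--dimensional, i.e. $\operatorname{Sym}^N t(X)=0$ for $N\gg 0$. By theorem \ref{main} the motive $h(X)$, hence its summand $t(X)$, is finite--dimensional of abelian type, so it splits as $t(X)=t^+\oplus t^-$ with $t^+$ evenly and $t^-$ oddly finite--dimensional. For motives of abelian type the Kimura even/odd grading is compatible with the parity of cohomology, so $H^\ast(t^+)$ sits in even degrees; but $H^\ast(t(X))$ is concentrated in the odd degree $n$, forcing $H^\ast(t^+)=0$. A finite--dimensional motive with vanishing realisation has numerically (= homologically) trivial identity, so theorem \ref{nilp} gives $\operatorname{id}_{t^+}=\operatorname{id}_{t^+}^{\circ N}=0$, that is $t^+=0$. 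Hence $t(X)=t^-$ is oddly finite--dimensional.

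Next I would show that every $a\in A^r_{num}(X)$ factors through $t(X)$. Viewing $a$ as a morphism $\mathbf1\to h(X)(r)$ and decomposing along the summands above, its component in the Lefschetz part can be nonzero only in $\LLL^{\,r}(r)\cong\mathbf1$, where it equals $\lambda\,h^r$ for some $\lambda\in\QQ$; the complementary component lies in $\operatorname{Hom}(\mathbf1,t(X)(r))$ and is homologically trivial, since $H^{2r}(t(X))=0$. Intersecting with $h^{n-r}$ therefore gives $0=a\cdot h^{n-r}=\lambda\,(h^r\cdot h^{n-r})=3\lambda$, so $\lambda=0$ and $a=\iota\circ b$ with $b\colon\mathbf1\to t(X)(r)$ and $\iota\colon t(X)\hookrightarrow h(X)$ the inclusion. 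To conclude, observe that the $m$--fold external product $a^{\times m}\in A^{mr}(X^m)$, regarded as a morphism $\mathbf1\to h(X)^{\otimes m}(mr)$, is invariant under the $S_m$--action permuting the identical factors, hence factors through the symmetric projector; since $a$ factors through $t(X)$, the class $b^{\otimes m}$ factors through $\operatorname{Sym}^m t(X)(mr)$. Taking $m=N$ with $\operatorname{Sym}^N t(X)=0$ yields $a^{\times N}=0$, i.e. $a\in A^r_\otimes(X)$, which is the desired inclusion.

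I expect the main obstacle to be the second paragraph: pinning down that the odd part of the finite--dimensional motive $t(X)$ accounts for all of $H^n(X)$ and carries no spurious evenly finite--dimensional summand. This is precisely where both the abelian--type hypothesis (to align the Kimura grading with cohomological parity) and the nilpotence theorem \ref{nilp} (to annihilate a phantom even piece) are genuinely used; once $t(X)$ is known to be oddly finite--dimensional, the factorisation and the symmetric--power vanishing are formal.
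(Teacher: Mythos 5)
Your proposal is correct and takes essentially the same route as the paper: decompose $h(X)$ into Lefschetz summands $\LLL^{i}$ plus a middle piece, observe that numerically trivial cycles live entirely in that piece, show it is oddly finite--dimensional using the oddness of $n$ together with the nilpotence theorem, and conclude by vanishing of a symmetric power --- the only difference being that the paper cites Kimura's Proposition~6.1 for the final symmetrisation step, which you re-prove inline. A small remark: your appeal to the abelian--type hypothesis is superfluous, since for \emph{any} finite--dimensional motive the even/odd Kimura splitting is compatible with cohomological parity (realising $\wedge^{N}t^{+}=0$ already kills the odd cohomology of $t^{+}$), so your second paragraph goes through without it.
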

  
  \begin{proof} As $X$ is a hypersurface, the K\"unneth components $\pi_j$ are algebraic \cite{K} and the Chow motive of $X$ decomposes
    \[ h(X)=h_n(X)\oplus \bigoplus_j \LLL(n_j)\ \ \ \hbox{in}\ \MM_{\rm rat}\ .\]
   (Here $\LLL$ denotes the Lefschetz motive, and the motive $h_n(X)$ is defined simply as $(X,\Delta-\sum_{j\not=n}\pi_i,0)$.) 
    
    Since $A^r_{num}\bigl(\LLL(n_j)\bigr)=0$, we have
    \[ A^r_{num}(X)= A^r_{num}\bigl(h_n(X)\bigr)\ .\]
    The motive $h_n(X)$ is oddly finite--dimensional. (Indeed, since $n$ is odd we have that the motive $\hbox{Sym}^m h_n(X)\in \MM_{\rm hom}$ is $0$ for some $m>>0$. By finite--dimensionality, the same then holds in $\MM_{\rm rat}$.)
    The proposition now follows from the following result (which is \cite[Proposition 6.1]{Kim}, and which is also applied in \cite{KSeb} where I learned this):
    
    \begin{proposition}[Kimura \cite{Kim}] Suppose $M\in\MM_{\rm rat}$ is oddly finite--dimensional. Then
       \[ A^r_{}(M)\ \subset\ A^r_\otimes(M)\ \ \ \hbox{for\ all\ }r\ .\]
     \end{proposition}
     
     \end{proof}


\vskip0.6cm

\begin{acknowledgements} This note is a belated echo of the Strasbourg 2014---2015 groupe de travail based on the monograph \cite{Vo}. Thanks to all the participants for the pleasant and stimulating atmosphere. 
Many thanks to Yasuyo, Kai and Len for lots of enjoyable after--work ap\'eritifs. 
\end{acknowledgements}


\end{document}